\DeclareMathOperator{\Ric}{Ric}
\newcommand{\lp}{\langle}
\newcommand{\rp}{\rangle}
\newcommand{\mA}{\mathcal{A}}
\newcommand{\mH}{\mathcal{H}}
\def\sideremark#1{\ifvmode\leavevmode\fi\vadjust{\vbox to0pt{\vss
 \hbox to 0pt{\hskip\hsize\hskip1em
 \vbox{\hsize3cm\tiny\raggedright\pretolerance10000
 \noindent #1\hfill}\hss}\vbox to8pt{\vfil}\vss}}}
\newtheorem{theorem}{Theorem}[section]
\newtheorem{proposition}[theorem]{Proposition}
\newtheorem{lemma}[theorem]{Lemma}
\theoremstyle{definition}
\newtheorem{definition}[theorem]{Definition}
\theoremstyle{remark}
\numberwithin{equation}{section}
\begin{document}

\title[Homological $k$-systole in $n$-manifolds]{Homological $k$-systole in $n$-manifolds with
  positive intermediate curvature}

\author{Jingche Chen}
\address{Department of Mathematical Sciences, Tsinghua University, 100084, Beijing, China}
\email{cjc23@mails.tsinghua.edu.cn}

\author{Han Hong}
\address{Department of Mathematics and statistics \\ Beijing Jiaotong University \\ Beijing \\
  China, 100044}
\email{hanhong@bjtu.edu.cn}

\begin{abstract}
In this paper, we prove optimal $k$-systolic inequalities and characterize the case of equality on
  closed $n$-dimensional Riemannian manifolds with positive intermediate curvature for $3\leq n\leq
  7$.  This unifies prior works of Bray-Brendle-Neves \cite{BrayBrenleNevesrigidity} and
  Chu-Lee-Zhu \cite{chuleezhu_n_systole}, and extends them to higher codimensions. The proof is
  inspired by our recent work on splitting theorems under intermediate curvature
  \cite{chenhong2026}. 
\end{abstract}

\maketitle

\section{Introduction}
This paper investigates the relationship between curvature positivity of an ambient manifold and
  the area of its minimal submanifolds. In dimension two, Toponogov's celebrated theorem
  \cite{toponogov_closed_geodesic} says the length of a simple closed geodesic in a two-sphere with
  positive Gauss curvature can be bounded from above. In higher dimensions, however, one cannot generally bound the area of an
  embedded minimal hypersurface in a Riemannian manifold of positive curvature. For example, the
  area of a closed embedded minimal hypersurface in $\mathbb{S}^{3}$ can be arbitrarily large
  (\cite{zhou_multiplicity_one_conj}).  

If we restrict attention to \textit{area-minimizing} (rather than merely minimal) objects, more
  positive results emerge.  Define
\begin{equation*}
    \mathcal{A}_{\mathbb{S}^2}(M, g) := \inf \left\{ \operatorname{Area}_g(\Sigma) | \Sigma  \text{
      is a non-contractible $2$-sphere in } M \right\}.
\end{equation*}
Motivated by Schoen-Yau's work on incompressible minimal surfaces
  \cite{Schoen-Yau-incompressible-minimal-surfaces} and the existence of smooth area minimizers in 
  nontrivial homotopy classes in $\pi_2(M)$ \cite{meeks-yau-existence}, Bray-Brendle-Neves
  \cite{BrayBrenleNevesrigidity} studied upper bounds of $\mathcal{A}_{\mathbb{S}^2}(M^3, g)$ for
  closed, oriented $3$-dimensional manifolds $M^3$ with positive scalar curvature. They proved
\[\mathcal{A}_{\mathbb{S}^2}(M, g)\cdot \inf R_g\leq 8\pi,\]
with equality if and only if $M$ is isometrically covered by $\mathbb{S}^2\times \mathbb{R}.$ The
  inequality (usually referred to as the spherical systolic inequality) follows from the stability
  inequality of the minimizer, while the rigidity part is more intriguing and is proved by using a
  local foliation argument.  A similar inequality and its rigidity for embedded
  $\mathbb{RP}^{2}$ were later established by Bray-Brendle-Eichmair-Neves
  \cite{bray-brendle-eichmair-neves}. 
Subsequently, Stern \cite{stern-scalar-curvature-harmonic-maps} employed a harmonic map level-set
  approach and obtained the following homological systolic inequality
\[\operatorname{sys}_2(M,g)\cdot \inf R_g\leq 8\pi,\]
again with equality if and only if $M$ is isometrically covered by $\mathbb{S}^2\times \mathbb{R}$,
  where $\operatorname{sys}_2(M,g)$ is the least area of nonseparating surfaces in $M$, i.e.,
\[\operatorname{sys}_2(M,g)=\inf\{\operatorname{Area}_g(\Sigma)|\Sigma\ \text{is a smooth surface},
  [\Sigma]\neq 0\in H_2(M)\}.\] 

There are several generalizations to higher dimensions. First, if one considers $n$-dimensional
  Riemannian manifolds with nontrivial second homotopy group $\pi_2(M)$, the spherical systolic
  inequality fails due to the example $\mathbb{S}^2(r_1)\times \mathbb{S}^{n-2}(r_2)$ for $n\geq 4$.
   Thus,
  Zhu \cite{zhu-spherical-systole-dimension-two} imposed a topological condition  requiring $M^n$
  to admit a nonzero-degree map to $\mathbb{S}^2\times T^{n-2}$ for $n\leq 7$, and proved
\[\mathcal{A}_{\mathbb{S}^2}(M, g) \cdot \inf R_g\leq 8\pi,\]
with equality if and only if $M$ is isometrically covered by $\mathbb{S}^2\times \mathbb{R}^{n-2}.$
  A hyperspherical systolic inequality remains unknown due to the lack of existence theory for area
  minimizers in higher homotopy classes. On the other hand, a higher dimensional homological
  systolic inequality was recently studied by Chu-Lee-Zhu \cite{chuleezhu_n_systole}. They showed
  that if an $n$-dimensional closed Riemannian manifold $M$ has nontrivial $(n-1)$-homology group
  and its biRic curvature is bounded below by $n-2$, then
\[\operatorname{sys}_{n-1}(M,g)\leq |\mathbb{S}^{n-1}|,\]
with equality if and only if $M$ is isometrically covered by $\mathbb{S}^{n-1}\times \mathbb{R}$.
  Here 
\begin{equation*}
    \operatorname{sys}_{n-1}(M, g) := \inf \left\{ \operatorname{Area}_g(\Sigma) \;\middle|\;
\begin{aligned}
\Sigma  &\text{ is a smooth hypersurface in } M, \\
&[\Sigma] \neq 0 \in H_{n-1}(M)
\end{aligned} \right\}.
\end{equation*}
This result holds in all dimensions under a generic regularity hypothesis. A crucial fact
  worth mentioning is that under a positive biRic curvature assumption, a stable minimal
  hypersurface inherits a positive spectral Ricci lower bound. 
  This allows one to estimate the area of the minimizer via Antonelli-Xu's comparison theorem
  \cite{antonelli-xu}, which is a generalization of the classical Bishop-Gromov volume comparison
  theorem. 

We first recall the notion of the $C_m$-curvatures introduced by Brendle-Hirsch-Johne
  \cite{brendlegeroch'sconjecture}. 

\begin{definition}

    Suppose $(M,g)$ is an $n$-dimensional manifold. For given orthonormal vectors
      $\{e_{1},...,e_{m}\}$ in $T_xM$ at the point $x\in M$, one can extend them to an orthonormal
      basis $\{e_{1},...,e_{n}\}$ of $T_{x}M$. 
    The $m$-intermediate curvature $C_{m}$ of the orthonormal vectors $\{e_{1},...,e_{m}\}$ is
      defined by 
    \[
    C^M_{m}(e_{1},...,e_{m}) = \sum_{p=1}^{m}\sum_{q=p+1}^{n}Rm(e_{p},e_{q},e_{p},e_{q}).\]
In particular, $C^M_1(e_1)=\Ric_M(e_1,e_1)$ and $C^M_{n-1}(e_1,\cdots,e_{n-1})=R_M/2.$
   Let 
\[C^M_m(x):=\min\{C^M_m(e_1,\cdots,e_m)| \{e_1,\cdots,e_m\}\ \text{are orthonormal in}\ T_xM\}.\]
\end{definition}

\begin{definition}
For an $n$-dimensional Riemannian manifold $(M,g)$ with nontrivial homology group $H_{k}(M;\mathbb{Z})$, we
  define the $k$-systole as
\begin{equation*}
    \text{sys}_{k}(M, g) := \inf \left\{ \mathcal{H}^k(\Sigma) \;\middle|\;
\begin{aligned}
\Sigma  &\text{ is an oriented smooth submanifold in } M, \\
&[\Sigma] \neq 0 \in H_{k}(M)
\end{aligned} \right\}.
\end{equation*}
When $k=n$, $\text{sys}_{k}(M, g)$ denotes the volume of $M$.
\end{definition}

Our main result is the following.
    \begin{theorem}\label{vol inequality in Cm case in the introduction}
    Assume \(3 \leq  n \leq  7\), \(1 \leq  m \leq  n-1\). Let $(N^{n},g)$ be a closed, orientable
      $n$-dimensional Riemannian manifold with positive $m$-intermediate curvature $C_{m}\geq n-m$.
      Suppose there exists a smooth map $f:N\rightarrow E^{n-m+1}\times \mathbb{T}^{m-1}$
      with nonzero degree, where $E$ is an oriented $(n-m+1)$-dimensional closed manifold.  Then
    \begin{equation}\label{maininequality}
        \operatorname{sys}_{n-m+1}(N,g)\leq  |\mathbb{S}^{n-m+1}|.
    \end{equation}
\end{theorem}

This theorem recovers several known systolic inequalities. When $m=1$, it is just the
  Bishop-Gromov volume comparison theorem. When $n=3$ and $m=2$, it recovers the results of Stern
  \cite{stern-scalar-curvature-harmonic-maps} and Bray-Brendle-Neves
  \cite{BrayBrenleNevesrigidity}. When $4\leq n\leq7$ and $m=n-1$, it
  recovers the result of Zhu \cite{zhu-spherical-systole-dimension-two}. When $4\leq n\leq7$
  and $m=2$, it recovers the result of Chu-Lee-Zhu
  \cite{chuleezhu_n_systole}. Thus the new cases covered by Theorem
  \ref{vol inequality in Cm case in the introduction} occur when $5\leq n\leq7$ and
  $3\leq m\leq n-2$. The proof also gives a unified approach to these previously known results.

The topological assumption is necessary, otherwise, as noted earlier, the product
  $\mathbb{S}^{m-1}(r_1)\times \mathbb{S}^{n-m+1}(r_2)$ $(m\geq 3)$ gives a counterexample. Indeed, by
  choosing $r_1$ sufficiently small, one can arrange $C_m\geq n-m$ independently of how large
  $r_2$ is. On the other hand, the nontrivial class
  $[\{p\}\times \mathbb{S}^{n-m+1}(r_2)]\in
  H_{n-m+1}\big(\mathbb{S}^{m-1}\times\mathbb{S}^{n-m+1}\big)$ has least volume equal to
  $r_2^{\,n-m+1}|\mathbb{S}^{n-m+1}|$, which can be made arbitrarily large. Thus the desired
  systolic bound fails without the topological assumption. We remark that the dimension
  restriction is not only due to regularity issues. Even though a generic regularity
  hypothesis is assumed, we cannot prove such an inequality in higher dimensions for all $m$.

The following theorem gives the rigidity statement.
\begin{theorem}\label{vol inequality in Cm case rigidity}
   Under the assumptions of Theorem \ref{vol inequality in Cm case in the introduction}, assume either
     \(3 \leq  n \leq  6\), \(1 \leq  m \leq  n-1\), or $n=7$, $m\in\{1,2,5,6\}$.
    Then equality holds in \eqref{maininequality} if and only if $N$ is isometrically covered by
      $\mathbb{S}^{n-m+1}\times \mathbb{R}^{m-1}$.
\end{theorem}

We outline the proof of Theorem \ref{vol inequality in Cm case in the introduction}, which is
  inspired by our recent work on splitting theorems under intermediate curvature. The
  topological assumption gives nontrivial homology classes by successively capping with the pulled
  back torus classes (see Lemma \ref{topology inherit}). In each such class we choose a minimizer of
  the corresponding weighted functional; equivalently, this is an area minimizer for a conformally
  changed metric. Since $n\leq7$, these minimizers are smooth, and hence we obtain a stable
  weighted $(m-1)$-slicing
  $\Sigma_{m-1}\subset \cdots\subset \Sigma_{0}=N$. This slicing is different from that of
  Brendle-Hirsch-Johne.
  Hence we only need to show that $|\Sigma_{m-1}|\leq
  |\mathbb{S}^{n-m+1}|$, since $\Sigma_{m-1}$ represents a nontrivial class in $H_{n-m+1}(N)$.
   An important feature of $\Sigma_{m-1}$ is that $\Sigma_{m-1}$ satisfies the
  uniformly positive spectral Ricci curvature condition (see Theorem \ref{spectral inheritance}).
  Thus, the area estimate follows by Antonelli-Xu's volume comparison theorem.

We also outline the proof of Theorem \ref{vol inequality in Cm case rigidity}. The case $m=1$ is
  exactly the equality case in the Bishop-Gromov volume comparison theorem, so assume
  $2\leq m\leq n-1$. Under the equality assumption, we use the same weighted slicing
  $\Sigma_{m-1}\subset\cdots\subset\Sigma_1\subset\Sigma_0=N$. Equality in the final volume
  estimate forces the terminal slice $\Sigma_{m-1}$ to be round and forces the final spectral
  function to be constant; this is the initial step of the rigidity induction.

The rest of the proof proceeds by upward induction in dimension, or equivalently by reverse
  induction on the index $k$. Suppose that the desired splitting has already been proved for
  $\Sigma_{k+1}$. The hypersurface $\Sigma_{k+1}\subset\Sigma_k$ is chosen as a minimizer of the
  weighted functional determined by the spectral function $u_k$. The spectral inheritance theorem
  gives the inherited curvature condition on $\Sigma_{k+1}$, while its rigidity statement gives
  total geodesicity, a Hessian--Ricci identity, and the vanishing of the normal derivative of
  $u_k$ along $\Sigma_{k+1}$.

The main remaining point is to promote these identities from one hypersurface to the whole ambient
  slice $\Sigma_k$. For this we use the surface-capture, or metric-deformation, technique. It was
  first introduced by Liu \cite{Liu-nonnegative-Ricci-curvature} for noncompact manifolds and later
  extended by Chu, Lee, and Zhu \cite{chuleezhu_n_systole} to the compact setting
 (see also \cite{Carlotto-Chodosh-Eichmair-PMT,chodosh-eichmair-moraru-splitting,chu-lee-zhu-kaler-splitting,hong-wang-spectral-splitting,hong-wang-splitting-scalarcurvature}).
  After a local conformal deformation near an arbitrary point, the rigidity of the deformed
  minimizers gives a sequence of totally geodesic hypersurfaces converging from one side. An
  open-closed argument then shows that through every point of $\Sigma_k$ there is a weighted
  minimizing hypersurface in the same homology class. Applying the rigidity identities along all
  these hypersurfaces implies that $u_k$ is constant and that $\Ric_{\Sigma_k}\geq0$. The standard
  splitting lemma then shows that $\Sigma_k$ is isometrically covered by
  $\mathbb{S}^{n-m+1}\times\mathbb{R}^{m-k-1}$. Repeating this argument reaches $k=1$; the final
  step $k=0$ is the unweighted case and gives the desired covering of $N$.

One advantage of the present approach is its uniform treatment of intermediate curvature
  conditions. In earlier work, the scalar curvature case relied on toric symmetrization, while the
  biRicci case was treated using the weighted slicing of Brendle--Hirsch--Johne. Here the spectral
  inheritance principle for intermediate curvature, an observation in \cite{chenhong2026}, replaces
  these separate mechanisms and applies to the full range of intermediate curvature conditions
  considered in this paper.

  \vskip.2cm
The cases $n=7$, $m=3,4$ are excluded from the rigidity statement for a technical reason in the
  induction. In the surface-capture step one needs a strict numerical inequality
  $D(n-k,m-k)>\frac{k-1}{2k}$ in order to use the rigidity part of the spectral inheritance theorem.
  This strict inequality holds in the range stated above but fails exactly in the borderline cases
  $n=7$, $m=3,4$. Thus our argument does not force the identities needed to propagate the splitting
  in these two cases. It would be interesting to know whether the equality case of
  \eqref{maininequality} is still rigid when $(n,m)=(7,3)$ or $(7,4)$, namely, whether equality
  still implies that $N$ is isometrically covered by
  $\mathbb{S}^{n-m+1}\times\mathbb{R}^{m-1}$, or whether there exists a counterexample to this
  rigidity statement in one of these borderline cases.

\vskip.2cm
The paper is structured as follows. In Section \ref{section2}, we recall the spectral curvature
  inheritance and topology inheritance and discuss the standard splitting under the assumption of
  nonnegative Ricci curvature.  In Section
  \ref{section3}, we prove the inequality part of the main theorem. In Section \ref{section4}, we
  complete the characterization of equality in the main theorem.

\subsection{Acknowledgements}
 The first author would like to thank his supervisor Prof Haizhong Li for his encouragement and
   support. The second author is supported by the Fundamental Research Funds for the Central Universities No. YA26JBMC00040, No. 2024XKRC008 and NSFC No. 12401058.

\section{Preliminary}\label{section2}

\begin{definition}
We say that $N$ has 
spectral $(k,m,\lambda)$-intermediate curvature if there exist a positive function $u \in
  C^{2,\beta}(N)$ for some $\beta \in (0,1)$ and constants $k \geq 0, \lambda$ such that
\[
 -k \Delta_N u + C^N_m\, u \geq \lambda u. 
\]
In particular, when $k = 0$, this reduces to $C_{m}^{N}\geq \lambda$.
\end{definition}
When $m=1$, the definition reduces to the spectral $(k,\lambda)$-Ricci curvature.
\begin{definition}
    We say that $N$ has spectral $(k,\lambda)$-Ricci curvature if there exist a positive function  
      $u \in C^{2,\beta}(N)$ for some $\beta \in (0,1)$ and constants $k \geq 0,\lambda$ such that
    \[-k \Delta_N u+\Ric_N u \geq \lambda u.\]
    If $\lambda>0$, we say that $N$ has positive spectral $(k,\lambda)$-Ricci curvature.
\end{definition}

For spectral Ricci curvature, we have the following important result.
\begin{theorem}[\cite{antonelli-xu}]\label{volume bound under spectral Ric}
    Let $N^n$ be an $n$-dimensional closed smooth Riemannian manifold with $n\geq 3$, and let
      $0\leq \gamma \leq \frac{n-1}{n-2}$. Assume that $N$ has positive spectral
      $(\gamma,\lambda)$-Ricci curvature.
    Then 
    \[\operatorname{vol}(N)\leq \left(\frac{n-1}{\lambda}\right)^{n/2}|\mathbb{S}^{n}|.\]
    Moreover, if equality holds, then the function $u$ in the definition of spectral Ricci
      curvature is constant and $N$ is isometric to the round sphere of sectional curvature
      $\frac{\lambda}{n-1}$.
\end{theorem}

For integers $n\geq 3 $ and $1\leq m\leq n-1$ such that $m^2-mn+2n-2>0$ and $m^2-mn+m+n\geq0$, we
  denote 
\[D(n,m)=\min\left\{\frac{m}{2m-2},\frac{1}{n-m},\frac{m^2-mn+m+n}{2(m^2-mn+2n-2)}\right\}.\]
We prove the following theorem.

\begin{theorem}\label{spectral inheritance}
    Let $N$ be an $n$-dimensional closed Riemannian manifold with positive spectral
      $(k,m,\lambda)$-intermediate curvature for $0\leq k<4$ \textnormal{(i.e. $\lambda>0$)}. Suppose that $\Sigma$ is
      a closed $(n-1)$-dimensional hypersurface that is a minimizer of $\mA_{k}=\int
      u^k d\mathcal{H}^{n-1}$ in a nontrivial homology class. If $k=0$, or if
      $0<k<4$ and $D(n,m)\geq \frac{k-1}{k}$, then $\Sigma$ admits positive spectral
      $(\frac{4}{4-k},m-1,\lambda)$-intermediate curvature.

    Moreover, the following rigidity statements hold:
    \begin{enumerate}[label=(\alph*)]
        \item If $0<k<4$, $C_{m-1}^{\Sigma}=\lambda$, and
          $D(n,m)> \frac{k-1}{k}$, then $|A^{\Sigma}|=0$,
          $\nabla_Nu=0$, and
          \[
              k(\nabla_N^2\log u)(\nu,\nu)=\Ric_N(\nu,\nu)
          \]
          on $\Sigma$.
        \item If $k=0$ and $C_{m-1}^{\Sigma}=\lambda$, then $|A^{\Sigma}|=0$ and
          $\Ric_N(\nu,\nu)=0$ on $\Sigma$.
    \end{enumerate}
\end{theorem}

\begin{proof}
    Assume first that $0<k<4$.
    Since $\Sigma$ is the minimizer of $\mA_{k}$, we calculate the first and second variations of
      $\mA_{k}$. The first variation is
  
       \begin{equation*}
        \frac{d}{d t}\bigg|_{t=0}\mA_{k}(\Sigma_t)=\int_{\Sigma}\left(H_\Sigma
          u^k+\lp\nabla^{N}u^k,\nu\rp\right)\psi d\mH^{n-1}
    \end{equation*}
    Thus $H_\Sigma=-k \nabla^N_\nu\log u$ on $\Sigma$. 
    The second variation is:
    \begin{equation*}
        \begin{split}
            \frac{d^2}{d t^2}\bigg|_{t=0}\mA_{k}(\Sigma_t)&=\int_{\Sigma}|\nabla^{\Sigma}\psi|^2
              u^k-\left(|A_{\Sigma}|^2+\Ric_N(\nu,\nu)\right)\psi^2 u^k\\
            &+\int_\Sigma k(k-1)u^{k-2}\left(\nabla^N_{\nu}u\right)^2\psi^2
              +\int_{\Sigma}ku^{k-1}\left(\Delta_N u-\Delta_{\Sigma} u\right)\psi^2\\
            &\geq 0.
        \end{split}
    \end{equation*}
    Let $\psi=u^{-k/2}\phi$. 
    Then
    \[|\nabla^{\Sigma}\psi|^2
      =u^{-k}|\nabla^\Sigma\phi|^2+\frac{k^2}{4}\phi^2u^{-k-2}|\nabla^\Sigma
      u|^2-ku^{-k-1}\phi\nabla^\Sigma\phi\cdot\nabla^\Sigma u.\]
    We obtain
    \begin{equation}\label{A_k-stability inequality}
    \begin{split}
        0&\leq \int_\Sigma |\nabla^\Sigma \phi|^2+(ku^{-1}\Delta_N
          u-|A_\Sigma|^2-\Ric_N(\nu,\nu)+\frac{k-1}{k}H_{\Sigma}^2)\phi^2\\
        & +\int_\Sigma (\frac{k^2}{4}-k)\phi^2|\nabla^\Sigma \log
          u|^2+k\phi\nabla^\Sigma\phi\cdot\nabla^\Sigma\log u
    \end{split}
    \end{equation}
    where we have used the critical equation. Using the Cauchy-Schwarz inequality
    \begin{equation}\label{cauchy ineq}
    k|\phi \nabla^\Sigma\phi||\nabla^\Sigma\log u|\leq \epsilon \phi^2|\nabla^\Sigma \log
      u|^2+\frac{k^2}{4\epsilon}|\nabla^\Sigma \phi|^2
    \end{equation}
    and taking $\epsilon=k-k^2/4$, we have
    \[0\leq \int_\Sigma \frac{4}{4-k}|\nabla^\Sigma\phi|^2+(ku^{-1}\Delta_N
      u-|A_\Sigma|^2-\Ric_N(\nu,\nu)+\frac{k-1}{k}H_{\Sigma}^2)\phi^2.\]
    
    It follows from the assumption that $N$ has positive spectral $(k,m,\lambda)$-intermediate
      curvature that
    \begin{equation*}
        0\leq\int_\Sigma
          \frac{4}{4-k}|\nabla^\Sigma\phi|^2+(C_m^N-\lambda-|A_\Sigma|^2-\Ric_N(\nu,\nu)+\frac{k-1}{k}H_{\Sigma}^2)\phi^2.
    \end{equation*}
    Let $\nu$ be the unit normal vector field of $\Sigma$ in $N$. At a point $p\in \Sigma$, we
      assume that $\{e_2,\cdots,e_n\}$ is an  orthonormal basis of $T_p\Sigma$ such that
      $\{e_1=\nu,e_2,\cdots,e_n\}$ is an orthonormal basis of $T_pN$ and
      $C_{m-1}^\Sigma=C_{m-1}^\Sigma(e_2,\cdots,e_m).$ Then the Gauss equation yields
\begin{align*}
    C_m^N&\leq \sum_{i=2}^{m}\sum_{j=i+1}^n R^N_{ijij}+\Ric_N(\nu,\nu)\\
    &=\sum_{i=2}^{m}\sum_{j=i+1}^n (R^\Sigma_{ijij}-h_{ii}h_{jj}+h_{ij}^2)+\Ric_N(\nu,\nu)\\
      &=C^\Sigma_{m-1}-\sum_{i=2}^{m}\sum_{j=i+1}^n (h_{ii}h_{jj}-h_{ij}^2)+\Ric_N(\nu,\nu)
\end{align*}   
where $h(\cdot,\cdot)$ is the second fundamental form of $\Sigma$ in $N$ with respect to the unit
  normal $\nu$. Now we have
\begin{equation}\label{inequality_1_lemma}
0\leq \int_\Sigma
  \frac{4}{4-k}|\nabla^\Sigma\phi|^2+\left(C_{m-1}^\Sigma-\lambda-|A_\Sigma|^2-\sum_{i=2}^{m}\sum_{j=i+1}^n (h_{ii}h_{jj}-h_{ij}^2)+\frac{k-1}{k}H_{\Sigma}^2\right)\phi^2.
\end{equation}

In the following we directly cite a sharp algebraic inequality proved by Chen \cite[Lemma
  5.8]{chenshuli_end}:
\[|A_\Sigma|^2+\sum_{i=2}^{m}\sum_{j=i+1}^n (h_{ii}h_{jj}-h_{ij}^2)\geq D(n,m)H_\Sigma^2\]
if $m^2-mn+2n-2>0$ and $m^2-mn+m+n\geq 0$.

Using the above inequality, we have
\[0\leq\int_\Sigma \frac{4}{4-k}|\nabla^\Sigma\phi|^2+(C_{m-1}^\Sigma-\lambda)\phi^2.\]
When $\Sigma$ is compact, this implies that there exists a positive function $v$ on $\Sigma$ such that
\[-\frac{4}{4-k}\Delta_\Sigma v+(C^\Sigma_{m-1}-\lambda)v=\lambda_1 v\geq 0.\]

If $C_{m-1}^{\Sigma}=\lambda$, then taking $\phi=1$ in \eqref{inequality_1_lemma}, we obtain
\[
0\leq \int_\Sigma -|A_\Sigma|^2-\sum_{i=2}^{m}\sum_{j=i+1}^n
  (h_{ii}h_{jj}-h_{ij}^2)+\frac{k-1}{k}H_{\Sigma}^2.
\]
Since $D(n,m)>\frac{k-1}{k}$, it follows that $\Sigma$ is totally geodesic.
Moreover, equality in the Cauchy-Schwarz step in \eqref{cauchy ineq} forces
  $\nabla^{\Sigma}\log u=0$. Combining this with
  $H_{\Sigma}=-k\nabla_{\nu}^{N}\log u$, we get $\nabla_{N}u=0$ along $\Sigma$.
It remains to prove the last identity.  Along $\Sigma$ we have equality in the spectral curvature condition,

\[
    -k\Delta_N u+C_m^Nu=\lambda u,
\]
and equality in the Gauss equation,
\[
    C_m^N=C_{m-1}^{\Sigma}+\Ric_N(\nu,\nu).
\]
Since $C_{m-1}^{\Sigma}=\lambda$, the second equality gives
\[
    C_m^N-\lambda=\Ric_N(\nu,\nu).
\]
On the other hand, $\nabla_Nu=0$ along $\Sigma$ implies
  $u|_{\Sigma}$ is constant and $\nabla_\nu^N u=0$. Hence
\[
    \Delta_Nu=\Delta_\Sigma u+\nabla_N^2u(\nu,\nu)+H_\Sigma\nabla_\nu^N u
    =\nabla_N^2u(\nu,\nu)
    =u(\nabla_N^2\log u)(\nu,\nu)
\]
on $\Sigma$. Substituting this into
  $-k\Delta_Nu+C_m^Nu=\lambda u$ yields
\[
    k(\nabla_N^2\log u)(\nu,\nu)=C_m^N-\lambda.
\]
Thus
\[
    k(\nabla_N^2\log u)(\nu,\nu)=\Ric_N(\nu,\nu)
\]
on $\Sigma$.

It remains to treat the case $k=0$. In this case $\mA_0$ is the area functional and the spectral
  curvature assumption reduces to $C_m^N\geq \lambda$. Since $\Sigma$ is area-minimizing, it is
  stable minimal, and hence
\[
    0\leq \int_\Sigma |\nabla^\Sigma\phi|^2
      -\left(|A_\Sigma|^2+\Ric_N(\nu,\nu)\right)\phi^2
\]
for every smooth function $\phi$ on $\Sigma$. The same Gauss equation as above gives
\[
    C_m^N\leq C_{m-1}^{\Sigma}
    -\sum_{i=2}^{m}\sum_{j=i+1}^{n}(h_{ii}h_{jj}-h_{ij}^2)+\Ric_N(\nu,\nu),
\]
and Chen's algebraic inequality gives
\[
    |A_\Sigma|^2+\sum_{i=2}^{m}\sum_{j=i+1}^{n}(h_{ii}h_{jj}-h_{ij}^2)\geq 0,
\]
because $H_\Sigma=0$. Combining these inequalities with $C_m^N\geq\lambda$, we obtain
\[
    0\leq \int_\Sigma |\nabla^\Sigma\phi|^2+(C_{m-1}^{\Sigma}-\lambda)\phi^2.
\]
Therefore there exists a positive function $v$ on $\Sigma$ satisfying
\[
    -\Delta_\Sigma v+(C_{m-1}^{\Sigma}-\lambda)v\geq 0.
\]
Hence $\Sigma$ admits positive spectral $(1,m-1,\lambda)$-intermediate curvature, which is the
  claimed conclusion when $k=0$.
If in addition $C_{m-1}^{\Sigma}=\lambda$, then taking $\phi=1$ in the stability inequality gives
\[
    0\leq -\int_\Sigma \left(|A_\Sigma|^2+\Ric_N(\nu,\nu)\right).
\]
On the other hand, the Gauss equation and $C_m^N\geq \lambda=C_{m-1}^\Sigma$ imply
\[
    \Ric_N(\nu,\nu)\geq
    \sum_{i=2}^{m}\sum_{j=i+1}^{n}(h_{ii}h_{jj}-h_{ij}^2).
\]
Hence
\[
    |A_\Sigma|^2+\Ric_N(\nu,\nu)\geq
    |A_\Sigma|^2+\sum_{i=2}^{m}\sum_{j=i+1}^{n}(h_{ii}h_{jj}-h_{ij}^2)\geq0.
\]
Thus equality holds throughout. By the equality case of the algebraic inequality, $A_\Sigma=0$.
The stability inequality above then gives $\Ric_N(\nu,\nu)=0$ on $\Sigma$.

\end{proof}

The following lemma provides the existence of an area-minimizing hypersurface whenever $N$ satisfies
  a topological condition. Moreover, the hypersurface also inherits the topological condition.

\begin{lemma}\label{topology inherit}
    Let $N$ be an $n$-dimensional closed orientable manifold. Suppose $3\leq n\leq 7$, $1\leq m\leq n-1$ and there exists a smooth map $f:N\rightarrow E\times
      \mathbb{T}^{m-1}$ with nonzero degree. 
      Let $0\leq k\leq n-2$ and let $W\subset N$ be a closed, oriented
      $(n-k)$-dimensional smooth submanifold representing the homology class
      $[N]\frown(f^{*}(d\theta_{1})\smile \cdots\smile f^{*}(d\theta_{k}))\in
      H_{n-k}(N)$. Then there exists an area-minimizing hypersurface
      $\Sigma\subset W$ such that $\Sigma$ represents the homology class
      $[N]\frown(f^{*}(d\theta_{1})\smile \cdots\smile f^{*}(d\theta_{k+1}))\in H_{n-k-1}(N)$.
\end{lemma}
\begin{proof}
    Let $H_{k+1}\subset N$ be a smooth closed oriented hypersurface representing
      $[N]\frown f^{*}(d\theta_{k+1})$. After a small perturbation, we may assume that
      $H_{k+1}$ intersects $W$ transversely. Then $H_{k+1}\cap W$ is a smooth closed oriented
      hypersurface in $W$. We use $P$ to denote the Poincare duality map. By the standard
      intersection formula for Poincare duals,
    \[
    \begin{aligned}
    &[H_{k+1}\cap W] = P(P^{-1}([H_{k+1}])\smile P^{-1}([W]))\\
    &= [N]\frown(f^{*}(d\theta_{1})\smile \cdots\smile f^{*}(d\theta_{k+1}))\neq 0\in H_{n-k-1}(N).
    \end{aligned}
    \]
    Then
    \[
    [H_{k+1}\cap W]\neq 0 \in H_{n-k-1}(W).
    \]
    Thus, there exists $\Sigma\subset W$ which is a minimizing hypersurface representing $[H_{k+1}\cap W]\in H_{n-k-1}(W)$. Moreover,
    \[
    [\Sigma]= [H_{k+1}\cap W]\in H_{n-k-1}(N).
    \]
\end{proof}
\subsection{Splitting}
We state the following splitting result which will be used several times in our main proof.
\begin{lemma}\label{splitting-under-ric}
    Suppose $M^{n}$ is a closed Riemannian manifold with non-negative Ricci curvature and
      there exists a closed, embedded, two-sided, locally area-minimizing hypersurface $\Sigma$ in
      $M$.
    %Moreover, suppose $\Sigma$ satisfies $\text{Ric}_{M}(\nu,\nu)|_{\Sigma}=0$ and totally geodesic, where $\nu$ is unit normal vector field of $\Sigma$ in $M$. 
    Then there exists a neighborhood of $\Sigma$ in $M$ that is isometric to $(\Sigma\times
      (-\epsilon,\epsilon),g_{\Sigma}+dt^2)$. Moreover, if $\Sigma$ is area-minimizing in the
      homology class $[\Sigma]\neq 0\in H_{n-1}(M)$, then $M$ is isometrically covered by
      $(\Sigma\times \mathbb{R},g_{\Sigma}+dt^2)$.  
\end{lemma}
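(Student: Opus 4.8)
The plan is to prove the local statement by a Fermi‑coordinate/Riccati argument and then bootstrap it to the global one by passing to the cyclic cover dual to $\Sigma$.

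For the local splitting, I would first note that a locally area‑minimizing hypersurface is a stable minimal hypersurface, so the stability inequality with the test function $1$ (admissible since $\Sigma$ is closed) together with $\Ric\ge 0$ gives $\int_\Sigma(|A^\Sigma|^2+\Ric(\nu,\nu))\,d\mu\le 0$; hence $\Sigma$ is totally geodesic and $\Ric(\nu,\nu)\equiv 0$ along $\Sigma$. Next I would introduce Fermi coordinates $\Phi(x,t)=\exp_x(t\nu(x))$ on a tubular neighborhood, so $g=dt^2+g_t$ with $g_0=g_\Sigma$, and follow the mean curvature $H(t,\cdot)$ of the slices $\Sigma_t:=\Phi(\Sigma\times\{t\})$. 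The Riccati equation gives $\partial_t H=-|A_{\Sigma_t}|^2-\Ric(\partial_t,\partial_t)\le 0$, and since $H(0,\cdot)=0$ this forces $H(t,\cdot)\le 0$ for $t\ge 0$ and $H(t,\cdot)\ge 0$ for $t\le 0$. Consequently the slice‑area $a(t):=|\Sigma_t|$ has $a'(t)=\int_{\Sigma_t}H\,d\mu_t\le 0$ for $t\ge 0$ and $a'(t)\ge 0$ for $t\le 0$, so $a$ has a local maximum at $t=0$. But each $\Sigma_t$ is $C^1$‑close and homologous to $\Sigma$, so local area‑minimality gives $a(t)\ge a(0)$. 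Therefore $a\equiv a(0)$, which pins $H\equiv 0$, then $|A_{\Sigma_t}|\equiv 0$, i.e. $\partial_t g_t\equiv 0$, so the metric on the tube is $g_\Sigma+dt^2$.

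For the global statement I would argue as follows. Since $[\Sigma]\ne 0$, $\Sigma$ is non‑separating; cut $M$ along $\Sigma$ and glue $\mathbb Z$‑many copies end to end to form the infinite cyclic Riemannian cover $\pi:\hat M\to M$ with deck generator $\tau$, in which $\Sigma$ lifts to connected, two‑sided, properly embedded hypersurfaces $\hat\Sigma_k=\tau^k\hat\Sigma_0$, pairwise disjoint and linearly ordered, bounding compact fundamental domains. Because $\Sigma$ is homologically area‑minimizing in $M$ and $\pi$ is a Riemannian covering, a standard current pushforward argument shows that each $\hat\Sigma_k$ — and more generally any compact hypersurface cobounding a compact region with $\hat\Sigma_0$ has area at least $|\Sigma|$, so each $\hat\Sigma_k$ is locally area‑minimizing and the first part (whose proof only uses completeness) supplies a product neighborhood in $\hat M$. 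Now flow $\hat\Sigma_0$ along its perpendicular geodesics toward $\tau\hat\Sigma_0$, and let $\Sigma\times[0,T)\to\hat M$ be the maximal region carrying the product metric $g_\Sigma+dt^2$. If $T=\infty$, then $(\Sigma\times[0,\infty),g_\Sigma+dt^2)$ would embed isometrically into the compact region between $\hat\Sigma_0$ and $\tau\hat\Sigma_0$, contradicting finiteness of volume; so $T<\infty$. The limiting leaf $\Sigma_T$ is totally geodesic (a smooth limit of totally geodesic slices) and minimizes area against compact competitors (it cobounds the product region with $\hat\Sigma_0$), hence has a product neighborhood; maximality of $T$ then forces $\Sigma_T=\tau\hat\Sigma_0$ — otherwise the product region could be enlarged — where embeddedness and disjointness of the intermediate leaves are kept by the strong maximum principle for minimal hypersurfaces and the absence of focal points inside a product region. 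Thus the closed fundamental domain is isometric to $(\Sigma\times[0,T],g_\Sigma+dt^2)$; translating by the $\tau^k$ and gluing consecutive domains along the totally geodesic leaves $\tau^k\hat\Sigma_0$ yields an isometry $(\Sigma\times\mathbb R,g_\Sigma+dt^2)\cong\hat M$, so composing with $\pi$ exhibits $M$ as isometrically covered by $(\Sigma\times\mathbb R,g_\Sigma+dt^2)$.

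The routine half is the local splitting — the standard Riccati‑plus‑minimality computation. The delicate part, which I expect to be the main obstacle, is the global extension: making precise (via cyclic covers / currents) that the lifted hypersurface and the limiting leaf are area‑minimizing in the noncompact cover, keeping the flowed leaves embedded and ordered, and organizing the gluing so that the purely local product structures assemble into a single global product $\Sigma\times\mathbb R$.
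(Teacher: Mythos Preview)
Your proposal is correct and follows essentially the same approach that the paper sketches: the paper does not give a self-contained proof but refers to Heintze--Karcher and to Chu--Lee--Zhu, indicating exactly the equidistant-hypersurface/Riccati argument you carry out for the local splitting (area of the equidistant leaves is nonincreasing, forcing the normal to be parallel). Your cyclic-cover continuation argument for the global statement is the standard way to upgrade the local product to a covering by $\Sigma\times\mathbb{R}$, and your own caveats about embeddedness of the leaves and area-minimality of the limiting leaf identify precisely the bookkeeping one has to do; the paper simply outsources this to the cited references.
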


This follows from a special case of Theorem 3.2 (d) in \cite{heintze-karcher-submanifold}. Consider
  the foliation of local equidistant hypersurfaces around $\Sigma$. One can show that the area is
  non-increasing and the normal vector field of the foliation is parallel, thus giving a local
  splitting.  For dimension $3$, Bray-Brendle-Neves \cite{BrayBrenleNevesrigidity} used a different
  foliation and showed that the local splitting also holds when $M$ has nonnegative scalar
  curvature. A more systematic method in dimension $3$ is provided by Micallef-Moraru
  \cite{micallef-moraru-splitting}. Chu-Lee-Zhu \cite{chuleezhu_n_systole} applied the method of
  \cite{BrayBrenleNevesrigidity} and also proved the above lemma.

\section{Proof of the inequality}\label{section3}
    
    \begin{theorem}\label{vol inequality in Cm case}
    Assume \(3 \leq  n \leq  7\), \(1 \leq  m \leq  n-1\). Suppose $(N^{n},g)$ is a closed
      orientable $n$-dimensional Riemannian manifold with positive $m$-intermediate curvature
      $C_{m}\geq n-m>0$. Suppose there exists a smooth map
      $f:N\rightarrow E^{n-m+1}\times \mathbb{T}^{m-1}$ with nonzero degree, where $E$ is a closed
      orientable $(n-m+1)$-dimensional manifold. Then
    $$\operatorname{sys}_{n-m+1}(N,g)\leq  |\mathbb{S}^{n-m+1}|.$$ 
\end{theorem}

\begin{proof}
        The dimension $n\leq 7$ ensures the regularity of the area-minimizing hypersurfaces used
          below. If $m=1$, then $C_1=\Ric_N\geq n-1$. Since $N$ itself represents the nonzero
          fundamental class in $H_n(N;\mathbb{Z})$, we have
          $\operatorname{sys}_n(N,g)\leq \operatorname{vol}(N,g)$. The Bishop-Gromov volume
          comparison theorem gives
          $\operatorname{vol}(N,g)\leq |\mathbb{S}^n|$, and the desired estimate follows.

        We now assume $m\geq 2$. By assumption, $N$ has positive spectral
          $(0,m,n-m)$-intermediate curvature. Let $d\theta_1,\ldots,d\theta_{m-1}$ be the standard
          generators of $H^1(\mathbb{T}^{m-1};\mathbb{Z})$, pulled back to $N$ by the torus
          component of $f$. Set $\Sigma_0=N$, $u_0\equiv 1$, and $k_0=0$. We construct inductively
          a weighted slicing
          \[
              \Sigma_{m-1}\subset \Sigma_{m-2}\subset\cdots\subset \Sigma_1\subset\Sigma_0=N.
          \]
          Suppose $\Sigma_j$ has been constructed for some $0\leq j\leq m-2$, and suppose it
          admits positive spectral $(k_j,m-j,n-m)$-intermediate curvature, where
          \[
              k_j=\frac{2j}{j+1}.
          \]
          By Lemma \ref{topology inherit}, there exists a hypersurface
          $\Sigma_{j+1}\subset \Sigma_j$ minimizing 
          the weighted functional
          \[
              \mathcal{A}_{k_j}(\Sigma)=\int_{\Sigma}u_j^{k_j}\,d\mathcal{H}^{n-j-1}.
          \]
    
          Applying Theorem \ref{spectral inheritance}, and using the elementary inequality $(j>0)$
          \[
              D(n-j,m-j)\geq \frac{k_j-1}{k_j}=\frac{j-1}{2j}.
          \]
           We postpone the discussion of this inequality to the section \ref{deduction}. we obtain positive spectral
          $(k_{j+1},m-j-1,n-m)$-intermediate curvature on $\Sigma_{j+1}$, where
          \[
              k_{j+1}=\frac{4}{4-k_j}=\frac{2(j+1)}{j+2}.
          \]
          After $m-1$ steps, $\Sigma_{m-1}$ is a closed $(n-m+1)$-dimensional manifold satisfying
          positive spectral $(\frac{2m-2}{m},n-m)$-Ricci curvature.

    By construction,
    \[
        [\Sigma_{m-1}]=[N]\frown
        \big(f^*(d\theta_{1})\smile\cdots \smile f^{*}(d\theta_{m-1})\big)
        \neq 0\in H_{n-m+1}(N;\mathbb{Z}).
    \]
    Hence at least one connected component $\Gamma$ of $\Sigma_{m-1}$ represents a nonzero class in
      $H_{n-m+1}(N;\mathbb{Z})$.

    If $n-m+1\geq 3$, then it is not hard to check that 
    \begin{equation}\label{condition 1}
        \frac{2m-2}{m}\leq \frac{n-m}{n-m-1}.
    \end{equation}
    Applying Theorem \ref{volume bound under spectral Ric} to $\Gamma$ gives
    \[
        |\Gamma|\leq  |\mathbb{S}^{n-m+1}|.
    \]
    If $n-m+1=2$, then the final spectral Ricci inequality on $\Gamma$ has the form
    \[
        -\frac{2m-2}{m}\Delta_{\Gamma}u+K_{\Gamma}u\geq u.
    \]
    Dividing by $u$ and integrating gives
    \[
        |\Gamma|\leq \int_{\Gamma}K_{\Gamma}
          -\frac{2m-2}{m}\int_{\Gamma}|\nabla\log u|^2
        \leq 2\pi\chi(\Gamma)\leq 4\pi=|\mathbb{S}^2|.
    \]
    Therefore
    \[
    \operatorname{sys}_{n-m+1}(N,g)\leq |\Gamma|\leq |\mathbb{S}^{n-m+1}|.
    \]
    The proof is completed.
    \end{proof}

When $m=1$, the rigidity statement in Theorem \ref{vol inequality in Cm case rigidity} follows
  directly from the equality case in the Bishop-Gromov volume comparison theorem. Hence, hereafter to discuss the
  rigidity we only need to consider $2\leq m\leq n-1$.

\begin{proposition}\label{terminal-rigidity}
    Under the assumptions of Theorem \ref{vol inequality in Cm case}, assume $2\leq m\leq n-1$ and
      suppose the following equality holds 
    \[
        \operatorname{sys}_{n-m+1}(N,g)=|\mathbb{S}^{n-m+1}|.
    \]
    Let $\Gamma\subset N$ be a connected closed $(n-m+1)$-dimensional manifold with positive spectral
      $(\frac{2m-2}{m},n-m)$-Ricci curvature and $[\Gamma]\neq 0\in H_{n-m+1}(N)$, then the
      positive function associated with the spectral Ricci curvature condition on $\Gamma$
      is constant, and $\Gamma$ is isometric to the round sphere $\mathbb{S}^{n-m+1}$.
\end{proposition}
\begin{proof}
    Set $d=n-m+1$. 
    If $d\geq3$, then $\Gamma$ has positive spectral
      $(\frac{2m-2}{m},n-m)$-Ricci curvature, and
    \[
        \frac{2m-2}{m}\leq\frac{d-1}{d-2}.
    \]
    Theorem \ref{volume bound under spectral Ric} and $[\Gamma]\neq 0\in H_{n-m+1}(N)$ gives
    \[
        \operatorname{sys}_{d}(N,g)\leq |\Gamma|\leq |\mathbb{S}^{d}|.
    \]
    Since equality holds in the theorem, both inequalities are equalities and
      $|\Gamma|=|\mathbb{S}^{d}|$.
    The equality case in Theorem \ref{volume bound under spectral Ric} implies that the associated
      spectral function is constant and that $\Gamma$ is isometric to the round sphere of sectional
      curvature
    \[
        \frac{n-m}{d-1}=1.
    \]

    It remains to consider $d=2$, equivalently $m=n-1$. In this case the spectral Ricci
      inequality on $\Gamma$ has the form
    \[
        -\frac{2m-2}{m}\Delta_{\Gamma}u+K_{\Gamma}u\geq u.
    \]
    Dividing by $u$ and integrating gives
    \[
        |\Gamma|+\frac{2m-2}{m}\int_{\Gamma}|\nabla\log u|^2
          \leq \int_{\Gamma}K_{\Gamma}=2\pi\chi(\Gamma)\leq4\pi.
    \]
    Then $4\pi=\operatorname{sys}_{2}(N,g)\leq|\Gamma|\leq4\pi$. Hence, equality holds throughout. Hence
      $\nabla u=0$, $\chi(\Gamma)=2$, and $K_{\Gamma}=1$. Thus, $\Gamma$ is the round two-sphere.
\end{proof}

The proof of Theorem \ref{vol inequality in Cm case rigidity} in next section will proceed by upward
  induction along the weighted slicing. Proposition \ref{terminal-rigidity} above provides the initial
  step of this induction: at the terminal slice $\Sigma_{m-1}$, equality forces the final spectral
  function to be constant and the relevant component to be the round sphere.
    
    \section{Proof of the rigidity}\label{section4}
    For the rest of the paper, we focus on the proof of rigidity, i.e., Theorem \ref{vol inequality in Cm case rigidity}. 
    \subsection{Local conformal change of the metric}
    In this subsection, we will prepare the metric-deformation calculations inspired by Liu
      \cite{Liu-nonnegative-Ricci-curvature} and Chu-Lee-Zhu \cite{chuleezhu_n_systole}. We assume
      the dimension of $N$ satisfies $n\leq 7$ throughout this section. Thus the regularity of the
      minimizer is no longer an issue.

    Let $r_{\text{inj}}$ be the injectivity radius of $(N,g)$. Given any point $p\in N$ and any
      constant $r\in(0,r_{\text{inj}})$ and $t\in(0,1)$,
      we consider the following conformal metric:
    \[
    g_{p,r,t}=e^{-2t(r^2-\rho^2)^5}g,
    \]
    where
    \[
        \rho=\min\{r,\text{dist}_{g}(\cdot,p)\}.
    \]
    
    Denote by $f=t(r^2-\rho^2)^{5}$. Then we have the gradient bound
        \[
            |\nabla_{N}f|\leq 10t\rho(r^2-\rho^2)^{4},
        \]
        and the Hessian estimate
        \[
        \nabla^{2}_{N}f\geq 80t\rho^2(r^2-\rho^2)^{3}d\rho \otimes d\rho
          -10t(r^2-\rho^2)^{4}(d\rho\otimes d\rho + \rho|\nabla^{2}_{N}\rho|g).
        \]
        According to the Hessian comparison theorem, for $r$ small enough and every $0<\rho<r$, we
          have
        \begin{equation}
            \nabla^{2}_{N}f\geq 80t\rho^2(r^2-\rho^2)^{3}d\rho \otimes d\rho
              -C't(r^2-\rho^2)^4g.
        \end{equation}
        Hence, for every unit vector $e$, we obtain
        \begin{equation}\label{hess_1}
            \nabla_{N}^{2}f(e,e)\geq -C't(r^{2}-\rho^2)^4,
        \end{equation}
        and, by taking the trace of the tensor inequality above with $C''=nC'$,
        \begin{equation}\label{delta}
            \Delta_{N}f\geq 80t\rho^2(r^2-\rho^2)^3-C''t(r^2-\rho^2)^{4}.
        \end{equation}
    The following lemma shows how the intermediate curvature changes with respect to the new metric
      and gives a lower bound.

    \begin{lemma}[cf. Lemma 2.8 in \cite{chuleezhu_n_systole}]\label{C_m lower bound}
        Fix $r>0$ sufficiently small. For every $1\leq\ell\leq n-1$, there exists a constant
          $\theta\in(0,1)$
          independent of $p,r$ and $t$ such that the following holds in $B_{r}(p)\backslash
          B_{\theta r}(p)$
        \[
        e^{-2t(r^2-\rho^2)^{5}}\cdot C_{\ell}^{g_{p,r,t}}(e_{1,p,r,t},...,e_{\ell,p,r,t})\geq
          C_{\ell}(e_{1},...,e_{\ell})+75t\rho^2(r^2-\rho^2)^{3},
        \]
        where $\{e_{i}\}_{i=1}^{\ell}$ is any $g$-orthonormal $\ell$-frame in $T_xN$ and
          $\{e_{i,p,r,t}\}_{i=1}^{\ell}$ is the corresponding $g_{p,r,t}$-orthonormal
          $\ell$-frame obtained by normalizing $\{e_i\}_{i=1}^{\ell}$.
    \end{lemma}
    \begin{proof}
        Let $\{e_1,\ldots,e_n\}$ be a $g$-orthonormal frame extending
          $\{e_1,\ldots,e_\ell\}$, and let
          $e_{i,p,r,t}=e^{f}e_i$ be the corresponding $g_{p,r,t}$-unit vectors. Since
          $g_{p,r,t}=e^{-2f}g$, the conformal change formula gives, for $g$-orthonormal vectors
          $v,w$,
        \[
            \begin{aligned}
            \text{Rm}_{g_{p,r,t}}(e^fv,e^fw,e^fv,e^fw)
            &=e^{2f}\Big(\text{Rm}_{g}(v,w,v,w)+\nabla^{2}_{N}f(v,v)
              +\nabla^{2}_{N}f(w,w)\\
            &\quad +v(f)^2+w(f)^2-|\nabla_{N}f|^2\Big).
            \end{aligned}
        \]
        Summing this identity over the pairs appearing in the definition of $C_\ell(\cdots)$, we obtain
        \[
        \begin{aligned}
            &\quad e^{-2t(r^2-\rho^2)^{5}}\cdot
              C_{\ell}^{g_{p,r,t}}(e_{1,p,r,t},...,e_{\ell,p,r,t})\\
            &=C_{\ell}(e_{1},...,e_{\ell})+\ell\Delta_{N}f\\
            &\quad
              +(n-1-\ell)\sum_{i=1}^{\ell}\nabla_N^2f(e_i,e_i)\\
            &\quad
              +(n-1-\ell)\sum_{i=1}^{\ell}e_i(f)^2
              +\frac{\ell(3-2n+\ell)}{2}|\nabla_N f|^2.
        \end{aligned}
        \]
        Here the coefficient of each $\nabla_N^2f(e_i,e_i)$ with $1\leq i\leq \ell$ is
          $(n-1-\ell)+\ell=n-1$, while the coefficient of each of the remaining directions
          $e_{\ell+1},\ldots,e_n$ is $\ell$; this is why the Hessian terms can be written as above.

        Since $n-1-\ell\geq0$, the terms
          $(n-1-\ell)\sum_{i=1}^{\ell}e_i(f)^2$ are nonnegative. Moreover, by \eqref{hess_1} and
          the gradient bound $|\nabla_N f|\leq 10t\rho(r^2-\rho^2)^4$, after decreasing $r$ if
          necessary, there is a constant $C_1$ depending only on $n$ such that
        \[
            (n-1-\ell)\sum_{i=1}^{\ell}\nabla_N^2f(e_i,e_i)
            +\frac{\ell(3-2n+\ell)}{2}|\nabla_N f|^2
            \geq -C_1t(r^2-\rho^2)^4.
        \]
        Therefore
        \[
        \begin{aligned}
            &\quad e^{-2t(r^2-\rho^2)^{5}}\cdot
              C_{\ell}^{g_{p,r,t}}(e_{1,p,r,t},...,e_{\ell,p,r,t})\\
            &\geq C_{\ell}(e_{1},...,e_{\ell})+\ell\Delta_N f-C_{1}t(r^2-\rho^2)^{4}\\
            &\geq C_{\ell}(e_{1},...,e_{\ell})+80t\rho^2(r^2-\rho^2)^{3}
              -C_{2}t(r^2-\rho^2)^{4},
        \end{aligned}
        \]
        where in the last line we used \eqref{delta} and $\ell\geq1$.

        It remains to absorb the last error term in the annulus. If
          $x\in B_r(p)\backslash B_{\theta r}(p)$, then $\rho(x)\geq\theta r$ and
          $r^2-\rho(x)^2\leq (1-\theta^2)r^2$. Choosing $\theta$ sufficiently close to $1$, depending
          only on $C_2$, gives
        \[
            C_2(r^2-\rho^2)\leq 5\rho^2
        \]
        in this annulus. Hence
        \[
        \begin{aligned}
            &\quad e^{-2t(r^2-\rho^2)^{5}}\cdot
              C_{\ell}^{g_{p,r,t}}(e_{1,p,r,t},...,e_{\ell,p,r,t})\\
            &\geq C_{\ell}(e_{1},...,e_{\ell})+75t\rho^2(r^2-\rho^2)^{3}.
        \end{aligned}
        \]
    \end{proof}
    %%\begin{lemma}\label{ric}
    %%    Fix a $r$ small enough. There exists a constant $\theta\in(0,1)$ independent of $p,r$ and $t$ such that the following holds in $B_{r}(p)\backslash B_{\theta r}(p)$
    %%    \[
    %%    e^{-2t(r^2-\rho^2)^{5}}\Ric_{g_{p,r,t}}(v_{p,r,t},v_{p,r,t})\geq \Ric_{g}(v,v)+75t\rho^2(r^2-\rho^3)^{3}.
    %%    \]
    %%    where $v$ is any set of orthonormal vectors and $v_{p,r,t}$ is the $g_{p,r,t}$-unit normalization of $v$.
    %%\end{lemma}
    %%\begin{proof}
    %%    Since 
    %%    \[
    %%    \begin{aligned}\text{Rm}_{\tilde{M}}(\tilde{v},\tilde{w},\tilde{v},\tilde{w})
    %%    &=e^{2f}\Big(\text{Rm}_{M}(v,w,v,w)+\nabla^{2}_{M}f(v,v)+\nabla^{2}_{M}f(w,w)\\
    %%    &\quad+v(f)^2+w(f)^2-|\nabla_{M}f|^2\Big),
    %%    \end{aligned}
    %%    \]
    %%    then 
     %%   \[
    %%    \begin{aligned}
    %%        e^{-2t(r^2-\rho^2)^{5}}\Ric_{g_{p,r,t}}(v_{p,r,t},v_{p,r,t})&=\Ric(v,v)+(n-2)\nabla_{M}^2f(v,v)
    %%        +\Delta_{M} f\\
    %%        &\quad +(n-2)v^{2}(f)-(n-2)|\nabla_{M}f|^2. 
    %%    \end{aligned}
    %%    \]
    %%    Then for $r$ small enough, we have
    %%    \[
    %%    \begin{aligned}
    %%        e^{-2t(r^2-\rho^2)^{5}}\Ric_{g_{p,r,t}}(v_{p,r,t},v_{p,r,t})&=\Ric(v,v)+\Delta_{M} f-C_{1}t(r^2-\rho^2)^{4}\\
    %%        &\geq \Ric(v,v)+80t\rho^2(r^2-\rho^3)^{3}-C_{2}t(r^2-\rho^2)^{4}\\
    %%        &\geq \Ric(v,v)+75t\rho^2(r^2-\rho^3)^{3},
    %%    \end{aligned}
    %%    \]
    %%    where the first inequality is due to \eqref{hess_1}, the second inequality is from \eqref{delta} and the last inequality uses $\theta$ close enough to $1$.
    %%\end{proof}
    The following lemma gives an upper bound for the intermediate curvature of a hypersurface under
      the new metric. 
    \begin{lemma}[cf. Lemma 2.9 in \cite{chuleezhu_n_systole}]\label{C_m-1_upper}
        Given $2\leq \ell\leq n-1$, a positive constant $\Lambda$ and a sufficiently small $r$, there
          exist $\iota>0$ and $\theta\in(0,1)$ depending on $\Lambda$ such that if $\Sigma$ is a
          smooth embedded hypersurface satisfying $|A_\Sigma|_g\leq \Lambda$ and
          $|\nabla_{g}^{\Sigma}\rho|\leq \iota$ in $B_{r}(p)$, then the following holds in
          $(B_{r}(p)\backslash B_{\theta r}(p))\cap \Sigma$
        \[
            e^{-2t(r^{2}-\rho^2)^5}\cdot
              C_{\ell-1}^{g_{p,r,t}}(e_{1,p,r,t},...,e_{\ell-1,p,r,t})\leq
              C_{\ell-1}^{g}(e_{1},...,e_{\ell-1})+50 t \rho^2(r^2-\rho^2)^{3},
        \]
        where $\{e_{i}\}_{i=1}^{\ell-1}$ is any $g$-orthonormal $(\ell-1)$-frame in
          $T_x\Sigma$ and $\{e_{i,p,r,t}\}_{i=1}^{\ell-1}$ is the corresponding
          $g_{p,r,t}$-orthonormal $(\ell-1)$-frame obtained by normalizing
          $\{e_i\}_{i=1}^{\ell-1}$. 
    \end{lemma}

    Before the proof, we clarify that $C_{\ell-1}^{g}(e_{1},...,e_{\ell-1})$ is intrinsic $(\ell-1)$-intermediate curvature of $\Sigma$ with the induced metric from $(N,g)$. 
    \begin{proof}
        Let $h=g|_{\Sigma}$ and $\tilde h=g_{p,r,t}|_{\Sigma}=e^{-2f}h$, where
          $f=t(r^2-\rho^2)^5$. We work at a point $x\in
          (B_r(p)\backslash B_{\theta r}(p))\cap\Sigma$ and extend
          $\{e_1,\ldots,e_{\ell-1}\}$ to an $h$-orthonormal frame
          $\{e_1,\ldots,e_{n-1}\}$ of $T_x\Sigma$. Then
          $e_{i,p,r,t}=e^f e_i$.

        First we record the estimates for the restriction of $f$ to $\Sigma$. Since
          $|\nabla^\Sigma\rho|\leq\iota$,
        \[
            |\nabla^\Sigma f|\leq 10t\iota\rho(r^2-\rho^2)^4.
        \]
        Moreover,
        \[
            \nabla_\Sigma^2f
            =(80t\rho^2(r^2-\rho^2)^3-10t(r^2-\rho^2)^4)
              d^\Sigma\rho\otimes d^\Sigma\rho
              -10t\rho(r^2-\rho^2)^4\nabla_\Sigma^2\rho .
        \]
        The Hessian comparison theorem in $N$, together with $|A_\Sigma|_g\leq\Lambda$, gives
          $\rho|\nabla_\Sigma^2\rho|\leq C$ for $r$ sufficiently small, where $C$ depends only on
          $N$ and $\Lambda$. Hence
        \begin{equation}\label{sigma-hessian-upper}
            |\nabla_\Sigma^2f|+|\Delta_\Sigma f|
            \leq C t\iota^2\rho^2(r^2-\rho^2)^3+C t(r^2-\rho^2)^4.
        \end{equation}

        For $h$-orthonormal tangent vectors $v,w\in T_x\Sigma$, the conformal change formula for
          $\tilde h=e^{-2f}h$ gives
        \[
        \begin{aligned}
            \operatorname{Rm}_{\tilde h}(e^fv,e^fw,e^fv,e^fw)
            &=e^{2f}\Big(\operatorname{Rm}_{h}(v,w,v,w)+\nabla_\Sigma^2f(v,v)
              +\nabla_\Sigma^2f(w,w)\\
            &\quad +v(f)^2+w(f)^2-|\nabla^\Sigma f|^2\Big).
        \end{aligned}
        \]
        Summing this identity we obtain
        \[
        \begin{aligned}
            &\quad e^{-2t(r^2-\rho^2)^5}\cdot
              C_{\ell-1}^{g_{p,r,t}}(e_{1,p,r,t},...,e_{\ell-1,p,r,t})\\
            &=C_{\ell-1}^{g}(e_{1},...,e_{\ell-1})+(\ell-1)\Delta_\Sigma f\\
            &\quad +(n-\ell-1)\sum_{i=1}^{\ell-1}\nabla_\Sigma^2f(e_i,e_i)
              +(n-\ell-1)\sum_{i=1}^{\ell-1}e_i(f)^2\\
            &\quad +\frac{(\ell-1)(\ell+4-2n)}{2}|\nabla^\Sigma f|^2 .
        \end{aligned}
        \]
        The last coefficient is nonpositive for $2\leq\ell\leq n-1$, and the remaining gradient
          square terms are controlled by the preceding gradient estimate. Combining this with
          \eqref{sigma-hessian-upper}, after decreasing $r$ if necessary, yields
        \[
        \begin{aligned}
            &\quad e^{-2t(r^2-\rho^2)^5}\cdot
              C_{\ell-1}^{g_{p,r,t}}(e_{1,p,r,t},...,e_{\ell-1,p,r,t})\\
            &\leq C_{\ell-1}^{g}(e_{1},...,e_{\ell-1})
              +C_1t\iota^2\rho^2(r^2-\rho^2)^3+C_2t(r^2-\rho^2)^4 .
        \end{aligned}
        \]
        We now choose $\iota>0$ so small that $C_1\iota^2\leq25$. After that, choose
          $\theta\in(0,1)$ sufficiently close to $1$ so that
          $C_2(r^2-\rho^2)\leq25\rho^2$ on $B_r(p)\backslash B_{\theta r}(p)$. The desired estimate
          follows.
    \end{proof}

    \subsection{Splitting of $\Sigma_{k}$}\label{splitting-of-Sigma1}\label{deduction}
    In this subsection, we use metric-deformation to prove Theorem \ref{vol inequality in Cm case rigidity}.
    We first record the numerical inequality needed below. Assume either
      $3\leq n\leq 6$, $1\leq m\leq n-1$, or $n=7$ and $m\in\{1,2,5,6\}$. Then, for every
      $1\leq k\leq m-2$, the pair $(n-k,m-k)$ satisfies the hypotheses in Chen's algebraic
      inequality and
    \[
        D(n-k,m-k)>\frac{k-1}{2k}.
    \]
    Indeed, the first term in the definition of $D(n-k,m-k)$ is greater than $\frac12$, and the
      second term is greater than $\frac{k-1}{2k}$ in the above range. For the third term, writing
      $N_k=n-k$ and $M_k=m-k$, we have
    \[
    \frac{M_k^2-M_kN_k+M_k+N_k}{2(M_k^2-M_kN_k+2N_k-2)}
      -\frac{k-1}{2k}
      =
      \frac{m^2-mn+2n-2}{2k(M_k^2-M_kN_k+2N_k-2)}>0.
    \]
    The excluded cases $n=7$, $m=3,4$ are precisely the cases where this strict inequality fails:
    \[
    \begin{array}{c|c|c|c}
        (n,m) & k & D(n-k,m-k) & \dfrac{k-1}{2k} \\
        \hline
        (7,3) & 1 & D(6,2)=0 & 0 \\
        (7,4) & 1 & D(6,3)=0 & 0 \\
        (7,4) & 2 & D(5,2)=\dfrac14 & \dfrac14 .
    \end{array}
    \]
   But nonstrict inequality always holds in these ranges.
\begin{proof}[Proof of Theorem \ref{vol inequality in Cm case rigidity}]
If $m=1$, then $C_1=\Ric_N\geq n-1$ and equality in \eqref{maininequality} is equality in the
  Bishop-Gromov volume comparison. Hence $N$ is isometric to the round sphere $\mathbb{S}^n$, and
  the conclusion follows. We therefore assume $2\leq m\leq n-1$ below.
  
Since $N$ admits $f:N\rightarrow E\times \mathbb{T}^{m-1}$ with nonzero degree, then $[N]\frown (f^{*}(d\theta_{1})\smile f^{*}(d\theta_{2}) \cdots \smile f^{*}(d\theta_{m-1}))\neq 0\in H_{n-m+1}(N)$.
We use induction to prove that for any $0 \leq k\leq m-1$ and
    every $(n-k)$-dimensional closed submanifold $W^{n-k}$ in $N$ that satisfies 
    spectral $(\frac{2k}{k+1},m-k,n-m)$-intermediate curvature and $W^{n-k}$ in the homology class $[N]\frown (f^{*}(d\theta_{1})\smile f^{*}(d\theta_{2}) \cdots \smile f^{*}(d\theta_{k}))\neq 0\in H_{n-k}(N)$ (if $k=0$, take homology class $[N]$), the following
    hold:
    \begin{enumerate}[label=(\roman*)]
        \item $W^{n-k}$ is isometrically covered by
          $\mathbb{S}^{n-m+1}\times\mathbb{R}^{m-k-1}$.
        \item The positive function $u_W$ associated with the spectral curvature condition on $W^{n-k}$ is constant.
    \end{enumerate}

%We prove the rigidity by upward induction in dimension along the weighted slicing
 % $\Sigma_{m-1}\subset\cdots\subset\Sigma_1\subset\Sigma_0=N$, equivalently by reverse induction
  % on the index $k$. Under the equality assumption, each slice $\Sigma_k$ satisfies the inherited
  % spectral curvature condition and carries a nonzero-degree map to
  % $E^{n-m+1}\times\mathbb{T}^{m-k-1}$. We will show, for every
  % $0\leq k\leq m-1$, that $\Sigma_k$ is isometrically covered by
  % $\mathbb{S}^{n-m+1}\times\mathbb{R}^{m-k-1}$ and that the corresponding weight $u_k$ is constant
  % when $k\geq1$.
 If this is proved, the case $k=0$ gives the desired conclusion for $N$ and completes the proof
   of Theorem \ref{vol inequality in Cm case rigidity}.

Proposition \ref{terminal-rigidity} gives the initial case $k=m-1$.

For the induction step, fix $1\leq k\leq m-2$. Assume the conclusion is known for $k+1$; we prove
  it for $k$. Since $W^{n-k}$ satisfies spectral
  $(\frac{2k}{k+1},m-k,n-m)$-intermediate curvature, there exists
  $u_W\in C^{\infty}(W^{n-k})$ such that
  \[
      -\frac{2k}{k+1}\Delta_{W}u_W+C_{m-k}u_{W}\geq (n-m)u_{W}.
  \]
  By Lemma \ref{topology inherit}, choose a closed
  hypersurface $\Gamma\subset W$ minimizing
  $\int u_{W}^{\frac{2k}{k+1}}dA$ and $\Gamma$ in the homology class
  $[N]\frown (f^{*}(d\theta_{1})\smile f^{*}(d\theta_{2}) \cdots \smile f^{*}(d\theta_{k+1}))\neq 0\in H_{n-k-1}(N)$. Theorem \ref{spectral inheritance} implies that $\Gamma$ satisfies
positive spectral
  $(\frac{2k+2}{k+2},m-k-1,n-m)$-intermediate curvature. Hence the induction hypothesis gives the
  covering conclusion and the constancy of the corresponding spectral function on
  $\Gamma$. By the numerical observation at the beginning of Subsection
  \ref{splitting-of-Sigma1}, we have
  $D(n-k,m-k)>\frac{k-1}{2k}$.

  \vskip.2cm
  \textbf{Step 1: there is a sequence of totally geodesic hypersurfaces $\Gamma^{(i)}\subset W$ smoothly converging to $\Gamma$ from one side.}
\vskip.1cm
In order to guarantee that $\Gamma^{(i)}$ converges from one side of $\Gamma$,
          we cut $W$ along the hypersurface $\Gamma$ and then consider the metric
          completion $(\hat{W},\hat{g})$ of the complement $(W\backslash
          \Gamma,g)$.  Note
          \[
              [\Gamma]=[W]\frown f^*(d\theta_{k+1})\neq 0
              \in H_{n-k-1}(W).
          \]
          Hence,
          $\Gamma$ is non-separating in
          $W$ and $\partial \hat{W}$ consists of two copies of $\Gamma$. We
          fix one component of $\partial \hat{W}$ and denote it by $\hat{\Gamma}$.
          For $\epsilon$ small, we take a minimizing arclength parametrized geodesic
          $\hat{\gamma}:[0,\epsilon)\rightarrow (\hat{W},\hat{g})$ with
          $\hat{\gamma}(0)\in\hat{\Gamma}$ and $\hat{\gamma}\perp \hat{\Gamma}$. Note
          that we can take $\epsilon$ sufficiently small such that
          $\operatorname{dist}(\hat{\gamma}(t),\hat{\Gamma})=t$ for $t<\epsilon$. Denote
          $\hat{p}=\hat{\gamma}(s)$ for some fixed $s\in(0,\epsilon)$ and
          $\hat{d}=\operatorname{dist}_{\hat{g}}(\cdot,\hat{\Gamma})$. For $a>0$, set
          \[
              \hat B_a(\hat p)=\{x\in\hat{W}:\operatorname{dist}_{\hat g}(x,\hat p)<a\}.
          \]
          Let
          $\hat{\nu}_{\Gamma}$ be the unit outer normal vector of $\hat{\Gamma}$ in
          $\hat{W}$.
        
        Fix $\theta\in(0,1)$ and $\iota\in(0,1)$ such that Lemma \ref{C_m lower bound} and Lemma
          \ref{C_m-1_upper} hold with the choice $\Lambda = \max |A^{\hat{\Gamma}}|+1$. Since
          $\langle\nabla_{\hat{W}}\hat{d},\hat{\nu}_{\Gamma}\rangle|_{\hat{\gamma}(0)} =1$,
          continuity guarantees that there is an open neighborhood $U$ of $\hat{\gamma}(0)$ in the
          boundary component $\hat{\Gamma}$ such that
          $\langle\nabla_{\hat{W}}\hat{d},\hat{\nu}_{\Gamma}\rangle>\sqrt{1-\iota^2}$ on
          $U$. Then there exists $\delta\in(0,s)$ small enough such that
        \[
        \langle\nabla_{\hat{W}}\hat{d},\hat{\nu}_{\Gamma}\rangle>\sqrt{1-\iota^2}
          \quad\text{in}\  \hat{B}_{s+\delta}(\hat{p})\cap \hat{\Gamma}.
        \]
        
        Let $r=s+\tau$ with $0<\tau<\delta$ to be determined later. We consider the conformal metric
        \[
            \hat{g}_{r,t}=e^{-2t(r^2-\hat{\rho}^2)^5}\hat{g}
        \]
        where 
        \[
            \hat{\rho}=\min\{r,\text{dist}_{\hat{g}}(\cdot,\hat{p})\}.
        \]
        Set $\phi=t(r^2-\hat{\rho}^2)^5$. Since $\hat{\Gamma}$ is totally geodesic in
          $(\hat{W},\hat g)$, its $\hat g$-mean curvature is zero. Under the conformal change
          $\hat g_{r,t}=e^{-2\phi}\hat g$, the mean curvature of $\hat{\Gamma}$ with respect
          to the $\hat g_{r,t}$-unit outer normal $e^\phi\hat\nu_{\Gamma}$ is
        \[
            \hat H_{r,t}=e^\phi\big(\hat H-(n-k-1)\hat\nu_{\Gamma}(\phi)\big)
              =-e^\phi(n-k-1)\hat\nu_{\Gamma}(\phi).
        \]
        Moreover,
        \[
            \hat\nu_{\Gamma}(\phi)
            =-10t(r^2-\hat\rho^2)^4\hat\rho
              \langle\nabla_{\hat g}\hat\rho,\hat\nu_{\Gamma}\rangle.
        \]
        Therefore
        \[
            \hat{H}_{r,t}
            =10(n-k-1)e^{t(r^2-\hat{\rho}^2)^5}t(r^2-\hat{\rho}^2)^{4}\hat{\rho}
              \langle\nabla_{\hat{g}}\hat{\rho},\hat{\nu}_{\Gamma}\rangle \geq0.
        \]
        If we further consider the conformal metric
          $\tilde{g}_{r,t}=u_{W}^{\frac{2k}{(n-k-1)(k+1)}}\hat{g}_{r,t}$, then the mean curvature
          of $\hat{\Gamma}$ in $(\hat{W},\tilde{g}_{r,t})$ with respect to the unit
          outer normal is
        \[
            \tilde{H}_{r,t}
            =u_{W}^{-\frac{k}{(n-k-1)(k+1)}}
              \left(\hat{H}_{r,t}+\frac{k}{k+1}u_W^{-1}\nu_{r,t}(u_W)\right),
        \]
        where $\nu_{r,t}=e^{t(r^2-\hat\rho^2)^5}\hat\nu_{\Gamma}$ is the $\hat g_{r,t}$-unit outer
          normal. Since $\hat{\Gamma}$ satisfies induction assumptions, the rigidity part of Theorem \ref{spectral inheritance} implies
          $\hat{\nu}_{\Gamma}(u_{W})=0$ on $\hat{\Gamma}$. Hence
        \[
            \tilde{H}_{r,t}
            =u_{W}^{-\frac{k}{(n-k-1)(k+1)}}\hat{H}_{r,t}\geq 0.
        \]
        Thus,
        it follows from geometric measure theory that there exists a smooth embedded hypersurface
          $\hat{\Gamma}_{t,r}\subset\hat{W}$ such that
        \[
            \int_{\hat{\Gamma}_{t,r}}u_{W}^{\frac{2k}{k+1}} dA_{\hat{g}_{r,t}} =
              \min_{\substack{ \Sigma\subset\hat{W}\\ \Sigma \ \text{homologous to}\
              \hat{\Gamma}
              }}\int_{\Sigma}u_{W}^{\frac{2k}{k+1}}dA_{\hat{g}_{r,t}}.
        \]
        
        We claim that $\hat{\Gamma}_{t,r}\cap \hat{B}_{r}(\hat{p})\neq \emptyset$. Otherwise, because
          $\hat{g}_{r,t}=\hat{g}$ outside $\hat{B}_{r}(\hat{p})$ and $\hat{g}_{r,t}<\hat{g}$ inside
          $\hat{B}_{r}(\hat{p})$, we have
        \begin{align*}
            \int_{\hat{\Gamma}_{t,r}}u_{W}^{\frac{2k}{k+1}} dA_{\hat{g}_{r,t}}&\leq
              \int_{\hat{\Gamma}}u_{W}^{\frac{2k}{k+1}}
              dA_{\hat{g}_{r,t}}\\&<\int_{\hat{\Gamma}}u_{W}^{\frac{2k}{k+1}} dA_{\hat{g}}\\
            &\leq\int_{\hat{\Gamma}_{t,r}}u_{W}^{\frac{2k}{k+1}} dA_{\hat{g}}\\
    &=\int_{\hat{\Gamma}_{t,r}}u_{W}^{\frac{2k}{k+1}}dA_{\hat{g}_{r,t}}
        \end{align*}
        which is a contradiction.
          
        Notice as $t\rightarrow 0^+$,  $\hat{g}_{r,t}\rightarrow \hat{g}$ in $C^{4,\alpha}$.
          Moreover, for any small $t$,
        \[\int_{\hat{\Gamma}_{t,r}}u_{W}^{\frac{2k}{k+1}} dA_{\hat{g}_{r,t}}\leq
          \int_{\hat{\Gamma}}u_{W}^{\frac{2k}{k+1}} dA_{\hat{g}}. \]
        Since $\hat{\Gamma}_{t,r}$ is a stable minimal hypersurface with respect to the new
          metric $\tilde g_{r,t}$, the curvature estimate for stable minimal hypersurfaces applies.
          The metrics $\tilde g_{r,t}$ converge smoothly as $t\rightarrow0^+$, so the constants in
          the estimate are uniform for $t$ sufficiently small. Hence, after passing to a subsequence,
          $\hat{\Gamma}_{t,r}$ converges to a smooth weighted minimal hypersurface
          $\hat{\Gamma}_{r}$.
        Thus
        \[\int_{\hat{\Gamma}_{r}}u_{W}^{\frac{2k}{k+1}} dA_{\hat{g}}\leq
          \int_{\hat{\Gamma}}u_{W}^{\frac{2k}{k+1}} dA_{\hat{g}}.\]
        On the other hand, by the construction, we have
          $[\hat{\Gamma}_{r}]=[\hat{\Gamma}]\in H_{n-k-1}(\hat{W})$. Since $\hat{\Gamma}$ minimizes
          $\int u_W^{\frac{2k}{k+1}}dA_{\hat g}$ in this homology class, the reverse inequality also
          holds. Hence equality in above inequality holds, and $\hat{\Gamma}_r$ is again a weighted area-minimizer in
          the same homology class. In particular,
          the rigidity part of Theorem \ref{spectral inheritance} implies that
          $\hat{\Gamma}_{r}$ is totally geodesic. Since
          $\hat{\Gamma}_{t,r}\cap \hat{B}_{r}(\hat{p})\neq \emptyset$, then
          $\hat{\Gamma}_{r}$ must intersect $\overline{\hat{B}_{r}(\hat{p})}$. 

        Next, we need to show $\hat{\Gamma}_{r}\neq \hat{\Gamma}$. If not, we have
          $\hat{\Gamma}_{t,r}\rightarrow\hat{\Gamma}$ as $t\rightarrow 0^+$. 
        Making use of the fact that $\hat{\Gamma}_{t,r}$ is a stable weighted minimal
          hypersurface in $\hat{W}$ with respect to the functional
          $\int_{\Sigma}u_{W}^{\frac{2k}{k+1}}d A_{\hat{g}_{r,t}}$, using a computation similar
          to that in Theorem \ref{spectral inheritance}, we have
    \begin{equation}
    \begin{split}
    0&\leq  \int_{\hat{\Gamma}_{t,r}}\bigg(
      \frac{2k+2}{k+2}|\hat\nabla^{\hat{\Gamma}_{t,r}}_{\hat{g}_{r,t}}\phi|^2+\big(\frac{2k}{k+1}u_{W}^{-1}\hat{\Delta}^{\hat{W}}_{\hat{g}_{r,t}} u_{W}-|\hat{A}^{\hat{\Gamma}_{t,r}}_{\hat{g}_{r,t}}|^2\\
    & -\Ric^{\hat{W}}_{\hat{g}_{r,t}}(\hat{\nu}_{r,t},\hat{\nu}_{r,t})+\frac{k-1}{2k}|H^{\hat{\Gamma}_{t,r}}_{\hat{g}_{r,t}}|^2\big)\phi^2 \bigg) dA_{\hat{g}_{r,t}}.
    \end{split}
    \end{equation}
    Moreover, the Gauss equation yields
\begin{align*}
    C_{m-k}^{\hat{W},\hat{g}_{r,t}}\leq
      C_{m-k-1}^{\hat{\Gamma}_{t,r},\hat{g}_{r,t}}-\sum_{i=2}^{m-k}\sum_{j=i+1}^{n-k}
      (\hat{h}_{ii}\hat{h}_{jj}-\hat{h}_{ij}^2)+\Ric^{\hat{W}}_{\hat{g}_{r,t}}(\hat{\nu}_{r,t},\hat{\nu}_{r,t})
\end{align*}    
where $\hat h(\cdot,\cdot)$ is the second fundamental form of
  $\hat{\Gamma}_{t,r}\subset(\hat{W},\hat g_{r,t})$ with respect to the unit normal
  $\hat\nu_{r,t}$. Now we have
    \begin{equation}
    \begin{split}
    0&\leq  \int_{\hat{\Gamma}_{t,r}}\bigg(
      \frac{2k+2}{k+2}|\hat\nabla^{\hat{\Gamma}_{t,r}}_{\hat{g}_{r,t}}\phi|^2+\big(\frac{2k}{k+1}u_{W}^{-1}\hat{\Delta}^{\hat{W}}_{\hat{g}_{r,t}} u_{W}-|\hat{A}^{\hat{\Gamma}_{t,r}}_{\hat{g}_{r,t}}|^2\\
    &-C_{m-k}^{\hat{W},\hat{g}_{r,t}}+C_{m-k-1}^{\hat{\Gamma}_{t,r},\hat{g}_{r,t}}-\sum_{i=2}^{m-k}\sum_{j=i+1}^{n-k} (\hat{h}_{ii}\hat{h}_{jj}-\hat{h}_{ij}^2)+\frac{k-1}{2k}|H^{\hat{\Gamma}_{t,r}}_{\hat{g}_{r,t}}|^2\big)\phi^2 \bigg) dA_{\hat{g}_{r,t}}.
    \end{split}
    \end{equation}

As in the proof of Theorem \ref{spectral inheritance}, we use the sharp algebraic inequality
  proved by Chen \cite[Lemma 5.8]{chenshuli_end}, now applied with the parameters $n-k$ and
  $m-k$:
\[|\hat{A}^{\hat{\Gamma}_{t,r}}_{\hat{g}_{r,t}}|^2+\sum_{i=2}^{m-k}\sum_{j=i+1}^{n-k}
  (\hat{h}_{ii}\hat{h}_{jj}-\hat{h}_{ij}^2)\geq
  D(n-k,m-k)|H^{\hat{\Gamma}_{t,r}}_{\hat{g}_{r,t}}|^{2}\]
provided
\[
    (m-k)^2-(m-k)(n-k)+2(n-k)-2>0
\]
and
\[
    (m-k)^2-(m-k)(n-k)+(m-k)+(n-k)\geq 0.
\]

Since $D(n-k,m-k)>\frac{k-1}{2k}$ by the numerical observation before the proposition, the mean
  curvature term is nonpositive. Using the above inequality, we have
\begin{equation}
    \begin{split}
    0&\leq  \int_{\hat{\Gamma}_{t,r}}\bigg(
      \frac{2k+2}{k+2}|\hat\nabla^{\hat{\Gamma}_{t,r}}_{\hat{g}_{r,t}}\phi|^2+\big(\frac{2k}{k+1}u_{W}^{-1}\hat{\Delta}^{\hat{W}}_{\hat{g}_{r,t}} u_{W}-C_{m-k}^{\hat{W},\hat{g}_{r,t}}+C_{m-k-1}^{\hat{\Gamma}_{t,r},\hat{g}_{r,t}}\big)\phi^2 \bigg) dA_{\hat{g}_{r,t}}.
    \end{split}
    \end{equation}
    Our goal is to write the above inequality back in the original metric.
    To do so, we first prepare the following estimates. In
      $B_{r}(\hat{p})\backslash B_{\theta r}(\hat{p})$ when $\theta$ is sufficiently close to $1$,
    \[
    \begin{aligned}
    &\quad\hat{\Delta}^{\hat{W}}_{\hat{g}_{r,t}} u_{W}\\
    &=
      e^{2t(r^2-\hat{\rho}^2)^5}\big(\Delta_{\hat{g}}^{\hat{W}}u_{W}-(n-k-2)\langle\nabla^{\hat{W}}_{\hat{g}}(t(r^2-\hat{\rho}^2)^5),\nabla_{\hat{g}}^{\hat{W}}u_{W}\rangle\big)\\
    &\leq
      e^{2t(r^2-\hat{\rho}^2)^5}\big(\frac{k+1}{2k}(C^{\hat{W},\hat{g}}_{m-k}-(n-m))u_{W}-(n-k-2)\langle\nabla^{\hat{W}}_{\hat{g}}(t(r^2-\hat{\rho}^2)^5),\nabla_{\hat{g}}^{\hat{W}}u_{W}\rangle\big)\\
    &\leq
      e^{2t(r^2-\hat{\rho}^2)^5}\big(\frac{k+1}{2k}(C^{\hat{W},\hat{g}}_{m-k}-(n-m))u_{W}
        +Ct\hat\rho(r^2-\hat\rho^2)^{4}\big),
    \end{aligned}
    \]
    where the last line uses the uniform boundedness of $|\hat\nabla u_W|$ and does not require a
      sign for the gradient term. Moreover, on $\hat{\Gamma}_{t,r}$,
    \[
        dA_{\hat g_{r,t}}
        =e^{-(n-k-1)t(r^2-\hat\rho^2)^5}dA_{\hat g},
    \]
    and
    \[
    |\hat\nabla^{\hat{\Gamma}_{t,r}}_{\hat{g}_{r,t}}\phi|^2=
      e^{2t(r^2-\hat{\rho}^2)^5}|\hat\nabla^{\hat{\Gamma}_{t,r}}_{\hat{g}}\phi|^2.
    \]
    Hence the gradient term together with the area element carries the factor
    \[
        e^{(2-(n-k-1))t(r^2-\hat\rho^2)^5}.
    \]
    We now combine these estimates with Lemma \ref{C_m lower bound} and Lemma
      \ref{C_m-1_upper}. Since
    \[
        e^{-2t(r^2-\hat{\rho}^2)^5}C_{m-k}^{\hat{W},\hat g_{r,t}}
        \geq C_{m-k}^{\hat{W},\hat g}+75t\hat\rho^2(r^2-\hat\rho^2)^3
    \]
    and
    \[
        e^{-2t(r^2-\hat{\rho}^2)^5}C_{m-k-1}^{\hat{\Gamma}_{t,r},\hat g_{r,t}}
        \leq C_{m-k-1}^{\hat{\Gamma}_{t,r},\hat g}
          +50t\hat\rho^2(r^2-\hat\rho^2)^3,
    \]
    the curvature terms in the preceding inequality satisfy
    \[
    \begin{aligned}
    &\quad \frac{2k}{k+1}u_W^{-1}\hat\Delta^{\hat{W}}_{\hat g_{r,t}}u_W
      -C_{m-k}^{\hat{W},\hat g_{r,t}}
      +C_{m-k-1}^{\hat{\Gamma}_{t,r},\hat g_{r,t}}\\
    &\leq e^{2t(r^2-\hat{\rho}^2)^5}\big(C_{m-k-1}^{\hat{\Gamma}_{t,r},\hat g}
      -(n-m)-25t\hat\rho^2(r^2-\hat\rho^2)^3
      +Ct\hat\rho(r^2-\hat\rho^2)^4\big).
    \end{aligned}
    \]
    On the annulus, by taking $\theta$ closer to $1$ if necessary, the error term
      $Ct\hat\rho(r^2-\hat\rho^2)^4$ is bounded by a small multiple of
      $t\hat\rho^2(r^2-\hat\rho^2)^3$. Also, since $n-k-1\geq2$, the factor
      $e^{(2-(n-k-1))t(r^2-\hat\rho^2)^5}$ in the gradient term is at most $1$. The same factor
      appears in the zeroth-order terms after the curvature estimates above; its deviation from
      $1$ is bounded by $Ct(r^2-\hat\rho^2)^5$, which is also absorbed into
      $t\hat\rho^2(r^2-\hat\rho^2)^3$ on the annulus. Thus, for all sufficiently small positive
      $t$, the preceding inequality implies
    \begin{equation}\label{contradiction-inequality}  \begin{split}
    0&\leq  \int_{\hat{\Gamma}_{t,r}}\bigg(
      \frac{2k+2}{k+2}|\hat\nabla^{\hat{\Gamma}_{t,r}}_{\hat{g}}\phi|^2+\big(C_{m-k-1}^{\hat{\Gamma}_{t,r},\hat{g}}-(n-m)-24t\hat{\rho}^2(r^2-\hat{\rho}^2)^{3}\big)\phi^2 \bigg) dA_{\hat{g}}.
    \end{split}
    \end{equation}
       Thus $\hat{\Gamma}_{t,r}$ satisfies nonnegative spectral
         $(\frac{2k+2}{k+2},m-k-1,n-m)$-intermediate curvature in the original metric $\hat g$.
         Moreover, note that $\hat{\Gamma}_{t,r}$ lies in the same nonzero capped homology class as
         $\hat{\Gamma}$. Hence, the induction hypothesis applies to
         $S=\hat{\Gamma}_{t,r}$. In particular, $S$ is isometrically covered by
         $\mathbb{S}^{n-m+1}\times\mathbb{R}^{m-k-2}$, and hence
         $C_{m-k-1}^{\hat{\Gamma}_{t,r},\hat{g}}=n-m$.
        Then \eqref{contradiction-inequality} becomes
        \begin{equation*}
        0\leq  \int_{\hat{\Gamma}_{t,r}}\bigg(
          \frac{2k+2}{k+2}|\hat\nabla^{\hat{\Gamma}_{t,r}}_{\hat{g}}\phi|^2-24t\hat{\rho}^2(r^2-\hat{\rho}^2)^{3}\phi^2 \bigg) dA_{\hat{g}}.
        \end{equation*}
        Under the contradiction assumption, $\hat{\Gamma}_{t,r}$ converges smoothly to
          $\hat{\Gamma}$ as $t\to0^+$. Since $\hat{\Gamma}$ passes through the annulus
          $B_r(\hat p)\backslash B_{\theta r}(\hat p)$, for $t$ sufficiently small the hypersurface
          $\hat{\Gamma}_{t,r}$ intersects this annulus in a set of positive measure. Taking
          $\phi=1$, the right-hand side is therefore strictly negative, a contradiction. Thus we
          have proved that $\hat{\Gamma}_{r}\neq \hat{\Gamma}$. 

            Since the metric $\hat{g}_{r,t}$ converges to $\hat{g}$ in $C^{4,\alpha}$ as
              $t\to0^+$, the limiting hypersurface $\hat{\Gamma}_{r}$ converges smoothly to
              $\hat{\Gamma}$ as $r\to0^+$. To see that $r$ can be chosen to tend to zero, take
              $s_i\to0^+$ and choose $\tau_i\in(0,\delta_i)$; then $r_i=s_i+\tau_i\to0^+$. Hence,
              we obtain a sequence of totally geodesic hypersurfaces
              $\Gamma^{(i)}=\hat{\Gamma}_{r_{i}}\subset W$ smoothly converging to
              $\Gamma$ from one side as $i\rightarrow \infty.$
            
\vskip.2cm
  \textbf{Step 2: for any point $p\in W$ we can find a
      closed hypersurface $\Gamma(p)\subset W$ passing through $p$ which minimizes
      $\int u_{W}^{\frac{2k}{k+1}} dA$ in the class of hypersurfaces homologous to
      $\Gamma$.}
\vskip.1cm
Let $\mathcal{P}$ be the set of points $p\in W$ for which there exists a closed
      hypersurface $\Gamma(p)\subset W$ passing through $p$ and minimizing
      $\int u_W^{\frac{2k}{k+1}}dA$ in the class of hypersurfaces homologous to
      $\Gamma$. This set is nonempty, since it contains $\Gamma$.

    We first note that $\mathcal{P}$ is closed. Indeed, if $p_i\in\mathcal{P}$ and
      $p_i\to p$, choose corresponding minimizing hypersurfaces
      $\Gamma(p_i)$. These hypersurfaces have uniformly bounded weighted area and are stable
      weighted minimizers. By the compactness theorem for stable minimizing hypersurfaces, after
      passing to a subsequence they converge smoothly to a minimizing hypersurface
      $\Gamma(p)$ in the same homology class. Since $p_i\in\Gamma(p_i)$ and
      $p_i\to p$, the limit hypersurface passes through $p$. Hence $p\in\mathcal{P}$.

    Suppose, toward a contradiction, that $\mathcal{P}\neq\ W$. Then
      $W \setminus\mathcal{P}$ is a nonempty open set. Choose a boundary point
      $p\in\partial( W\setminus\mathcal{P})\subset\mathcal{P}$ and let
      $\Gamma(p)$ be a minimizing hypersurface passing through $p$. Applying the argument of
      \textbf{Step 1} with $\Gamma(p)$ in place of $\Gamma$, we obtain a sequence of
      minimizing hypersurfaces homologous to $\Gamma(p)$, and hence to $\Gamma$,
      converging smoothly to $\Gamma(p)$ from the side facing
      $W\setminus\mathcal{P}$. For $i$ sufficiently large, one of these hypersurfaces meets
      $W\setminus\mathcal{P}$. But every point on such a hypersurface belongs to
      $\mathcal{P}$ by definition, a contradiction. Therefore $\mathcal{P}=W$.

      \vskip.2cm
  \textbf{Step 3: the function $u_W$ is constant on $W$.
    Moreover, $W$ is isometrically covered by $\mathbb{S}^{n-m+1}\times\mathbb{R}^{m-k-1}$.}
\vskip.1cm
By \textbf{Step 2}, for every point $p\in W$, there exists a closed
      hypersurface $\Gamma(p)\subset W$ passing through $p$ which minimizes
      $\int u_W^{\frac{2k}{k+1}}dA$ in the class of hypersurfaces homologous to $\Gamma$.
    By the rigidity part of Theorem \ref{spectral inheritance}, the following three conclusions
      hold. Let $\nu_p$ be the local unit normal of $\Gamma(p)$ in $W$.
    \begin{enumerate}[label=(\roman*)]
        \item $\Gamma(p)$ is totally geodesic in $W$.
        \item 
        \[
            \frac{2k}{k+1}(\nabla_{W}^{2}\log u_W)(\nu_p,\nu_p)
            =\Ric_{W}(\nu_p,\nu_p)
            \quad\text{on } \Gamma(p).
        \]
        \item $\nabla_Wu_W=0$ along $\Gamma(p)$.
    \end{enumerate}
    Since every point of $W$ lies on some such hypersurface $\Gamma(p)$, it follows that
      $\nabla_Wu_W=0$ at every point of $W$. Thus $u_W$ is constant.
    Then we have
    \[
    \Ric_{W}(\nu_p,\nu_p)=0 \quad \text{on } \Gamma(p).
    \]
    Let $\{e_i\}$ be a local orthonormal basis on $\Gamma(p)$. By step 1, we can construct a sequence of weighted minimizing hypersurfaces converging to
      $\Gamma(p)$. Thus, according to \cite[Lemma 2.11]{chuleezhu_n_systole}, we have 
    \[
    \text{Rm}_{W}(e_{i},\nu_p,e_{j},\nu_p)
      =\text{Rm}_{W}(e_{i},\nu_p,e_{j},e_{\ell})=0.
    \]
    Since $\Gamma(p)$ is totally geodesic in $W$, the Gauss equation identifies the
      sectional curvature of $W$ on two-planes tangent to $\Gamma(p)$ with the
      sectional curvature of $\Gamma(p)$, which is nonnegative by the covering conclusion
      above. The preceding curvature identities show that the mixed sectional curvatures vanish.
      Thus every sectional curvature of $W$ at $p$ is nonnegative. Since $p$ is arbitrary,
      the sectional curvature of $W$ is nonnegative everywhere. In particular,
      $\Ric_{W}\geq0$. Since $u_W$ is constant, the weighted minimizing property of
      $\Gamma$ becomes the usual area-minimizing property in its nonzero homology class.
      Therefore $\Gamma$ is locally area-minimizing in $W$. Hence, by Theorem
      \ref{splitting-under-ric}, $W$ is isometrically covered by
      $\Gamma\times \mathbb{R}$. Since $\Gamma$ is isometrically covered by
      $\mathbb{S}^{n-m+1}\times \mathbb{R}^{m-k-2}$, we obtain that $W$ is isometrically
      covered by $\mathbb{S}^{n-m+1}\times \mathbb{R}^{m-k-1}$.

It remains to prove the case $k=0$. Here $u_{N}\equiv1$ and hypersurface $\Gamma\subset N$
  is an ordinary area-minimizing hypersurface in a nonzero homology class $[N]\frown f^{*}(d\theta_{1})$. Theorem \ref{spectral inheritance} for $k=0$ and  the induction hypothesis implies $\Gamma$ is
  isometrically covered by $\mathbb{S}^{n-m+1}\times\mathbb{R}^{m-2}$. The
  rigidity statement of Theorem \ref{spectral inheritance} for $k=0$ gives that $\Gamma$ is
  totally geodesic in $N$ and that $\Ric_N(\nu,\nu)=0$ along $\Gamma$. We now use the unweighted
  analogue of the preceding surface-capture argument. Since $u_N\equiv1$, the weighted functional
  is the ordinary area functional and all terms involving derivatives of the weight vanish. The
  metric-deformation and open-closed arguments above therefore produce, through every point of
  $N$, an area-minimizing hypersurface homologous to $\Gamma$. Applying the $k=0$ rigidity statement
  of Theorem \ref{spectral inheritance} to these hypersurfaces gives total geodesicity and
  vanishing normal Ricci curvature along each of them. As in Step 3, the one-sided limiting
  hypersurfaces then give the mixed curvature identities, and hence $\Ric_N\geq0$. Since $\Gamma$ is
  area-minimizing in a nonzero homology class, Lemma \ref{splitting-under-ric} implies that $N$ is
  isometrically covered by $\Gamma\times\mathbb{R}$, and hence by
  $\mathbb{S}^{n-m+1}\times\mathbb{R}^{m-1}$.
\end{proof}
    \bibliographystyle{alpha}

\bibliography{reference}
\end{document}